\newtheorem{tw}{Theorem}
\newtheorem{lem}[tw]{Lemma}
\newtheorem{cor}[tw]{Corollary}
\theoremstyle{definition}
\newtheorem{deff}[tw]{Definition}
\newtheorem{ex}[tw]{Example}
\newtheorem{rem}[tw]{Remark}
\newtheorem{pro}[tw]{Problem}
\begin{document}

\title{Asymptotic property C of the countable direct sum of uniformly discrete $0$-hyperbolic spaces}
\author{Kamil Orzechowski\\
Faculty of Mathematics and Natural Sciences\\
University of Rzesz\'ow\\
35-959 Rzesz\'ow, Poland\\
E-mail: kamil.orz@gmail.com}
\date{}

\maketitle

\begin{abstract}
We define the direct sum of a countable family of pointed metric spaces in a way resembling the direct sum of groups. Then we prove that if a family consists of $0$-hyperbolic (in the sense of Gromov) and $D$-discrete spaces, then its direct sum has asymptotic property C. The main example is a countable direct sum of free groups of (possibly varying) finite rank. This is a generalization of T. Yamauchi's result concernig the countable direct sum of the integers.

\vspace{5mm}

\noindent \textbf{Keywords}: asymptotic property C, Gromov hyperbolic spaces, direct sum.

\vspace{5mm}

\noindent \textbf{2010 Mathematics Subject Classification}: Primary 54F45; Secondary 20F67, 20F69.
\end{abstract}

\section{Introduction}

We recall some definitions and known facts about the notion of {\em Gromov hyperbolicity}. We refer to \cite[Section 6.2]{Roe} and \cite[Section 3]{Bonk} for detailed exposition.

Let $(X,d)$ be a metric space with a fixed basepoint $x_0\in X$. For any $x,y\in X$ we denote by $(x|y)_{x_0}$ the {\em Gromov product} of $x$ and $y$ with respect to the basepoint $x_0$ defined by the following formula:
\begin{equation}
(x|y)_{x_0}=\frac{1}{2}\left(d(x,x_0)+d(y,x_0)-d(x,y)\right).
\end{equation}
\begin{deff}\label{hd}
We say that a space $X$ is $\delta$-{\em hyperbolic} for $\delta\geq 0$ if and only if
\begin{equation}\label{hyp}
(x|z)_{x_0}\geq \min \{(x|y)_{x_0},(y|z)_{x_0}\}-\delta
\end{equation}
for any $x,y,z\in X$ and for any choice of a basepoint $x_0\in X$.
We call $X$ (Gromov) {\em hyperbolic} if there exists $\delta\geq 0$ such that $X$ is $\delta$-hyperbolic.
\end{deff}

It can be shown that if a space satisfies \eqref{hyp} for a fixed $x_0$, then it is $2\delta$-hyperbolic. Thus, in the case $\delta=0$ the condition defining $0$-hyperbolicity does not depend on the choice of a basepoint and we will omit it in the notation writing simply $(x|y)$ for the Gromov product.

Bonk and Schramm showed in \cite[Theorem 4.1]{Bonk} that any $\delta$-hyperbolic metric space can be isometrically embedded into a complete $\delta$-hyperbolic geodesic metric space.
For geodesic metric spaces there is a characterisation of hyperbolicity due to Rips using so called {\em thin triangles}. As we restrict our attention to the case $\delta=0$, a geodesic metric space is $0$-hyperbolic if and only if it is uniquely geodesic and for any $x,y,z\in X$ the geodesic $[x,y]$ is contained in the union
$[x,z]\cup [z,y]$. Such spaces are called $\mathbb{R}$-{\em trees}.

Summarising, $0$-hyperbolic spaces are precisely subspaces of $\mathbb{R}$-trees. We prefer the term ``$0$-hyperbolic'' because our arguments will not depend on existence of geodesics, solely on the Gromov condition \eqref{hyp} for $\delta=0$.

\begin{deff}\label{ud}
We call a metric space $(X,d)$ $D$-{\em discrete} if $d(x,y)\geq D$ for all $x\neq y$, $x,y\in X$. If there exists $D>0$ such that $X$ is $D$-discrete, we say that $X$ is {\em uniformly discrete}. 
\end{deff}

\begin{ex}
Each free group $\mathbb{F}_n$ on $n$ generators equipped with the usual word length metric becomes a $1$-discrete $0$-hyperbolic space. If we consider the Cayley graph of $\mathbb{F}_n$ with its edges realized as isometric copies of the unit interval $[0,1]$, we get a geodesic $0$-hyperbolic space (losing uniform discreteness). 
\end{ex}

We wil need some useful definitions.

\begin{deff}\label{ub}
We say that a non-empty family $\mathcal{U}$ of metric spaces is {\em uniformly bounded} if the number
$$\mathrm{mesh}(\mathcal{U}):=\sup \{\mathrm{diam}(U) \colon U \in \mathcal{U}\}$$
is finite. Let $R>0$, we say that $\mathcal{U}$ is $R$-{\em disjoint} if for any different $A,B\in \mathcal{U}$ we have
$$\mathrm{dist}(A,B):=\inf \{d(a,b)\colon a\in A, b\in B\} \geq R.$$ 
\end{deff}

Let us recall the definition of the asymptotic property C, which was introduced by Dranishnikov \cite{Dran}. It is one of several possible generalizations of the {\em asymptotic dimension}.

\begin{deff}
We say that a metric space $X$ has the {\em asymptotic property C} if for any strictly increasing sequence $(a_i)_{i\in \mathbb{N}}$ of natural numbers there is $n\in \mathbb{N}$ and a finite sequence $\{\mathcal{U}_i\}_{i=1}^{n}$ of uniformly bounded families such that $\bigcup_{i=1}^{n} \mathcal{U}_i$ covers $X$ and $\mathcal{U}_i$ is $a_i$-disjoint for $i=1,\dots,n$.
\end{deff}

T. Radul showed in \cite{Rad} that the asymptotic property C can be thought of as a transfinite extension of asymptotic dimension.
T. Yamauchi in his paper \cite{Y} proved that the group $\bigoplus_{i=1}^{\infty} \mathbb{Z}$ with a proper invariant metric has asymptotic property C.
Our goal is to extend this result to a more general context replacing each copy of $\mathbb{Z}$ with a $0$-hyperbolic space, all of them being $D$-discrete with common $D>0$. Of course, we have to define what {\em direct sum} means for pointed metric spaces (not necessarily groups) and choose a metric such that succeeding spaces are rescaled with multipliers tending to infinity.

\section{Some auxiliary facts}

We will restate some properties of $0$-hyperbolic spaces. They are essential for a proof that such spaces have asymptotic dimension at most $1$ (see \cite[Proposition 9.8]{Roe}).

From now, let $X$ be a $0$-hyperbolic space with a fixed basepoint $x_0\in X$. For real numbers $\alpha < \beta$ we define the {\em interval} $[\alpha,\beta)\subset X$ by the formula
$$[\alpha,\beta)=\{x\in X \colon \alpha \leq d(x,x_0) < \beta\}.$$
Note that we allow $\alpha,\beta$ to be negative, in particular the interval may be empty. It will not bother us and will be useful for keeping some notation concise. 
Fix $\alpha \in\mathbb{R}$ and $d,r>0$. Let us define an equivalence relation $\sim_{\alpha,r}$ in $[\alpha,\alpha+d)$ putting
$$x\sim_{\alpha,r} y \Longleftrightarrow (x|y) > \alpha - \frac{1}{2}r$$
for any $x,y\in [\alpha,\alpha+d)$. We will briefly verify that this relation is reflexive and transitive.
We have $(x|x)=d(x,x_0)\geq \alpha > \alpha - \frac{1}{2}r$ for $x\in [\alpha,\alpha+d)$.
Transitivity follows immediately from the definition of a $0$-hyperbolic space.

\begin{lem}\label{0h}
All equivalence classes of the relation $\sim_{\alpha,r}$ in $[\alpha,\alpha+d)$ are uniformly bounded by $2d+r$. Different equivalence classes are $r$-disjoint.
\end{lem}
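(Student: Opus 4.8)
The plan is to reduce both assertions to the single algebraic identity obtained by rearranging the definition of the Gromov product, namely $d(x,y)=d(x,x_0)+d(y,x_0)-2(x|y)$ for all $x,y\in X$. Everything then follows by feeding in the two-sided constraint $\alpha\leq d(x,x_0)<\alpha+d$ coming from membership in $[\alpha,\alpha+d)$ together with the cutoff value $\alpha-\frac{1}{2}r$ that defines $\sim_{\alpha,r}$. The diameter bound will use the upper endpoint of the interval, while the disjointness bound will use the lower endpoint; this is the one bookkeeping point worth keeping straight.

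For the diameter estimate I would fix two points $x,y$ lying in a common equivalence class. By definition of the relation this means $(x|y)>\alpha-\frac{1}{2}r$, while membership in the interval gives $d(x,x_0)<\alpha+d$ and $d(y,x_0)<\alpha+d$. Substituting into the identity yields $d(x,y)<(\alpha+d)+(\alpha+d)-2(\alpha-\frac{1}{2}r)=2d+r$. Since this holds for every pair of points of the class, each class has diameter at most $2d+r$, uniformly, which is the first assertion.

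For $r$-disjointness I would take $x,y$ in two distinct classes. Because $\sim_{\alpha,r}$ is an equivalence relation, distinct classes are disjoint, so $x\not\sim_{\alpha,r}y$ and hence $(x|y)\leq\alpha-\frac{1}{2}r$. This time I use the lower endpoints $d(x,x_0)\geq\alpha$ and $d(y,x_0)\geq\alpha$, so the same identity gives $d(x,y)\geq\alpha+\alpha-2(\alpha-\frac{1}{2}r)=r$. Letting $x,y$ range over the two classes, the infimum of these distances is at least $r$, so $\mathrm{dist}(A,B)\geq r$ for distinct classes $A,B$, which is the second assertion.

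I do not anticipate a genuine obstacle: the relation and the interval have been calibrated precisely so that the Gromov-product cutoff converts into the stated metric bounds. The only step requiring a word of care is the passage from ``distinct classes'' to $(x|y)\leq\alpha-\frac{1}{2}r$, since this relies on the relation genuinely partitioning $[\alpha,\alpha+d)$ and hence on the transitivity already verified above (symmetry being immediate from the symmetry of the Gromov product). Everything else is a direct substitution.
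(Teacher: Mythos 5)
Your proposal is correct and follows essentially the same argument as the paper: both assertions are derived from the identity $d(x,y)=d(x,x_0)+d(y,x_0)-2(x|y)$ together with the interval bounds, exactly as in the paper's proof. The only cosmetic difference is that for disjointness you argue directly from $x\not\sim_{\alpha,r}y$ (i.e.\ $(x|y)\leq\alpha-\frac{1}{2}r$ gives $d(x,y)\geq r$), whereas the paper proves the contrapositive (if $d(x,y)<r$ then $x\sim_{\alpha,r}y$) --- the same one-line computation read in the opposite direction.
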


\begin{proof}
Assume that $x\sim_{\alpha,r}y$. Then $(x|y)>\alpha-\frac{1}{2}r$, so $d(x,x_0)+d(y,x_0)-d(x,y)>2\alpha-r$. Thus
$$d(x,y)< d(x,x_0)+d(y,x_0)-2\alpha+r<2(\alpha+d)-2\alpha+r=2d+r.$$
Suppose now that $d(x,y)<r$. Then $(x|y)> \frac{1}{2}(2\alpha-r)=\alpha-\frac{1}{2}r$, which means that $x$ and $y$ belong to the same equivalence class.
\end{proof}

Observe that our result applies for all intervals $[\alpha,\alpha+d)$ and the bound $2d+r$ as well as the ``disjointness" constant $r$ do not depend on $\alpha$. Moreover, they do not depend on a particular $0$-hyperbolic space $X$.
Equivalently we can say that for any $0$-hyperbolic $X$ and $d>0$ the family $\{[\alpha,\alpha+d)\colon \alpha\in \mathbb{R}\}$ has asymptotic dimension $0$ {\em uniformly} with {\em type function} $\tau(r)=2d+r$. We refer to \cite[Example 9.14]{Roe} for the terminology.

Suppose we are given a number $R>0$. Let us consider the following families:
\begin{equation}\label{V}
\begin{split}
\mathcal{V}_0 & =  \bigcup_{n\in\mathbb{N}} [(2n-1)R,2nR)/\sim_{(2n-1)R,R},\\
\mathcal{V}_1 & =  \bigcup_{n\in\mathbb{N}_0} [2nR,(2n+1)R)/\sim_{2nR,R}.
\end{split}
\end{equation}
Both families $\mathcal{V}_0$ and $\mathcal{V}_1$ are $R$-disjoint. If two members of a fixed $\mathcal{V}_i$ are contained in different intervals, then it is easy to see that their distance is not less than $R$. For different members lying in the same interval we apply Lemma \ref{0h}.
Using Lemma \ref{0h} we also conclude that both families are uniformly $3R$-bounded. Obviously $\bigcup (\mathcal{V}_0\cup \mathcal{V}_1)=X$.

We will need some more families of subsets of $X$.
Suppose that $0<R_0<S$ and $m\in\mathbb{N}$ are given. For any $l\in\{0,\dots,2^m-1\}$ define
\begin{equation}\label{CD}
\begin{split}
\mathcal{C}_{l} & = \bigcup_{n\in\mathbb{N}_0} [(2^m (n-1)+l)S,(2^m n+l)S-R_0)/\sim_{(2^m (n-1)+l)S,R_0},\\
\mathcal{D}_{l} & = \bigcup_{n\in\mathbb{N}_0} [(2^m n +l)S-R_0,(2^m n +l)S)/\sim_{(2^m n +l)S-R_0,2^m S-R_0}.
\end{split}
\end{equation}
Similarly, $\mathcal{C}_{l}$ is $R_0$-disjoint and uniforlmy bounded for $l\in\{0,\dots,2^m -1\}$. Furthermore, $\bigcup_{l=0}^{2^m-1} \mathcal{D}_{l}$ is $(S-R_0)$-disjoint and uniformly bounded. We have also that $\bigcup(\mathcal{C}_l \cup \mathcal{D}_l)=X$ for any $l\in\{0,\dots,2^m -1\}$.

Note that the above construction works for any $0$-hyperbolic space $X$ and the boundedness and disjointness constants of the defined families remain the same.

\section{The main result}

Suppose that we have a sequence $(X_i,d_i)_{i\in\mathbb{N}}$ of $D$-discrete and $0$-hyperbolic spaces. Let $(o_i)_{i\in\mathbb{N}}$ be a sequence of basepoints $o_i\in X_i$. Define the {\em direct sum}
$$\bigoplus_{i=1}^{\infty} X_i= \left\{(x_i)\in \prod_{i=1}^{\infty} X_i \colon x_i=o_i \, \text{for all but finitely many} \, i\right\}.$$
We equip $\bigoplus_{i=1}^{\infty} X_i$ with a metric
\begin{equation}\label{metric}
d((x_i),(y_i))=\sum_{i=1}^{\infty} i\, d_i (x_i,y_i).
\end{equation}  

\begin{tw}\label{main}
The metric space $\bigoplus_{i=1}^{\infty} X_i$ has asymptotic property C.
\end{tw}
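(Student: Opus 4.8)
The plan is to construct, for any given strictly increasing sequence $(a_i)$, a finite collection of uniformly bounded families with the prescribed disjointness, by decomposing $\bigoplus_i X_i$ coordinate by coordinate and exploiting the fact (established in Section 2) that each $0$-hyperbolic $X_i$ admits, for any scale, a decomposition into two uniformly bounded disjoint families $\mathcal{V}_0,\mathcal{V}_1$ — essentially asymptotic dimension $1$ uniformly in $i$. The key structural observation is that the metric \eqref{metric} rescales the $i$-th factor by $i$, so contributions from large coordinates are magnified; combined with the direct-sum condition (only finitely many coordinates differ from the basepoint), this should let me control the whole space with a bounded number of families.

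**First I would** fix a large $N$ and split the problem: for coordinates $i > N$, the rescaling factor $i$ is so large that any two points differing in such a coordinate are already far apart, so these coordinates can be handled cheaply; the genuine work is in the first $N$ coordinates. For each of these I would pull back the families $\mathcal{V}_0^{(i)}, \mathcal{V}_1^{(i)}$ (or the finer $\mathcal{C}_l,\mathcal{D}_l$ from \eqref{CD}) adapted to a scale large enough to dominate $a_n$, then form product-type families across the coordinates. The natural candidates are sets of the form $\prod_{i\le N} U_i \times \{\text{tails}\}$ where each $U_i$ is a member of one of the two color classes for $X_i$; the disjointness of the product family at a given "color pattern" follows because in at least one coordinate the two color classes are disjoint, and boundedness follows by summing the (rescaled) diameters $\sum_{i\le N} i\,\mathrm{diam}(U_i)$, which is finite. **The second key step** is bookkeeping the number of families: a naive product of the $2$-colorings over $N$ coordinates gives $2^N$ families, which is acceptable since the definition of property C only requires \emph{finitely many} families — but I must ensure each family is $a_i$-disjoint for the \emph{correct} index $i$ in the enumeration $1,\dots,n$, so I will need to order the $2^N$ patterns and tune the scale parameters $S, R_0, m$ in \eqref{CD} so that the disjointness constant of the $j$-th family meets or exceeds $a_j$.

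**The hard part will be** the interaction between the rescaling and the disjointness requirement: because the target disjointness constants $a_i$ grow along the finite list while the decomposition scales must simultaneously dominate them, I expect to need the $\mathcal{C}_l/\mathcal{D}_l$ construction (with its free parameters $S,R_0,m$) rather than the coarser $\mathcal{V}_0/\mathcal{V}_1$, choosing $m$ so that $2^m$ exceeds the number of coordinates in play and choosing $S$ large relative to $\max_i a_i$. The delicate estimate is verifying $R$-disjointness of a product family when the two points agree in all "color-distinguishing" coordinates but differ in a non-distinguishing one: here I must argue that either they lie in a far-apart pair of intervals within some coordinate (distance controlled by the rescaling factor times $S$) or in the same bounded piece, and that the cumulative metric \eqref{metric} still separates distinct families by at least the required $a_j$. **Finally I would** discharge the tail coordinates $i>N$ by observing that choosing $N$ with $N\cdot D \ge a_n$ forces any two tail-differing points to be automatically separated, so a single bounded family (or the trivial cover) absorbs them, closing the argument with a total count $n$ of families depending only on $(a_i)$ and the chosen $N,m$.
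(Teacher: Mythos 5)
There is a genuine gap, and it is precisely the point the paper's proof is engineered around: your bookkeeping is circular. In your scheme the active block is the first $N$ coordinates, the product of the two-colourings produces on the order of $2^N$ families, and you discharge the tails by requiring $N\cdot D\ge a_n$ where $n$ is the \emph{total} number of families. Note first that the tails cannot be ``absorbed by a single bounded family'': the set of tails is unbounded, so every family in the cover must range over all tails, i.e.\ contain sets of the form $\prod_{i\le T}U_i\times\prod_{i>T}\{y_i\}$ for \emph{all} tails $(y_i)$. Consequently two members of one family with different tails are separated, in the metric \eqref{metric}, only by roughly $(T+1)\cdot D$ where $T$ is the coordinate at which the tails begin, so the constraint (number of active coordinates)$\,\cdot D\ge a_n$ is unavoidable. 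With $n\approx 2^N$ active-coordinate patterns this reads $N\,D\ge a_{2^N}$, which fails for any rapidly growing sequence, e.g.\ $a_i=2^{2^i}$. No amount of ``ordering the $2^N$ patterns and tuning $S,R_0,m$'' repairs this: enlarging the scales improves within-coordinate disjointness but does nothing for the tail separation, while enlarging $N$ inflates the family count faster than it improves the tails.

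The missing idea (Yamauchi's, reproduced in the paper) is to fix the number of families in terms of $a_1$ \emph{alone}, before any scale is chosen, while still activating a very long block of coordinates. With $k=R_0+1$ and $n=k2^k$ fixed in advance, the paper decomposes the first $k$ coordinates by the families $\mathcal{C}_l^{(i)},\mathcal{D}_l^{(i)}$ of \eqref{CD} and a middle block of $m=R_{k2^k}+1$ further coordinates by the $2^m$ colour patterns built from $\mathcal{V}_0,\mathcal{V}_1$ of \eqref{V}; crucially, the pattern index $l$ of the middle block is \emph{synchronized} with the index $l$ of the first-block decomposition, so each member of a family carries one common $l$. Different values of $l$ then force the middle parts $W,W'$ to be disjoint (the annuli underlying $\mathcal{V}_0$ and $\mathcal{V}_1$ are disjoint) and force $D_s,D'_s$ to be $(S-R_0)=R_{k2^k}$-far, so the $2^m$ patterns do \emph{not} multiply the family count: one gets exactly $1+k2^k$ families, the first $R_0$-disjoint and all others $R_{k2^k}$-disjoint, while the tails, beginning after coordinate $k+m$, are automatically $(k+m+1)>R_{k2^k}$-separated. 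Your proposal assembles the correct local ingredients (Lemma \ref{0h}, the families \eqref{V} and \eqref{CD}, the rescaling in \eqref{metric}) but lacks this synchronization; also, your intended role for $m$ (``so that $2^m$ exceeds the number of coordinates in play'') is not what makes the argument close — $m$ must dominate $R_n$ so that the tail threshold is met with $n$ already fixed. Without that, the count-versus-tail constraint cannot be satisfied for fast-growing sequences $(a_i)$.
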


\begin{proof}
The general idea is to rewrite Yamauchi's proof (\cite{Y}) replacing original families of subsets of $\mathbb{Z}$ with families \eqref{V}, \eqref{CD} of subsets of $X_i$ for $i\in\mathbb{N}$. The families will depend on $i$, which will be denoted by a superscript $(i)$. Nevertheless, the constants of disjointness and boundedness will suit for all $i\in\mathbb{N}$ uniformly. Without loss of generality we may assume that $D=1$, i.e. all the spaces $X_i$ are $1$-discrete.

Let $R_0<R_1<\dots$ be an arbitrary increasing sequence of natural numbers. Let $k=R_0+1$, $m=R_{k2^k}+1$ and $S=R_0+R_{k2^k}$. Take a bijection $\psi\colon \{0,1,\dots,2^m -1\}\to \{0,1\}^m$.
For each $i\in\mathbb{N}$ define families $\mathcal{V}_{0}^{(i)}, \mathcal{V}_{1}^{(i)}$ of subsets of $X_i$ as in \eqref{V} with $R=R_{k2^k}$.
For each $i\in\mathbb{N}$ and $l\in\{0,1,\dots,2^m -1\}$ define similarly $\mathcal{C}_{l}^{(i)}$ and $\mathcal{D}_{l}^{(i)}$ according to \eqref{CD}.
For any $j\in\mathbb{N}$ and $l\in\{0,1,\dots,2^m -1\}$ put also
$$\mathcal{W}_{l}^{(j)}=\left\{ \prod_{i=1}^{m} V_{i} \colon V_i \in \mathcal{V}_{\psi(l)_i}^{(j+i-1)}, i\in\{1,\dots,m\}\right\},$$
where $\psi(l)_i$ stands for the $i$-th coordinate of $\psi(l)\in\{0,1\}^{m}$. 
Note that $\mathcal{W}_{l}^{(j)}$ is a family of subsets of $X_j \times \dots \times X_{j+m-1}$. Since $\mathcal{V}_{0}^{(i)}\cup \mathcal{V}_{1}^{(i)}$ covers $X_i$, we conclude that $\bigcup_{l=0}^{2^m -1} \mathcal{W}_{l}^{(j)}$ covers $X_j \times \dots \times X_{j+m-1}$ for any $j\in\mathbb{N}$.
Put
$$\mathcal{U}_0=\left\{\prod_{i=1}^{k} C_i \times W \times \prod_{i>k+m} \{y_{i}\} \colon (C_1,\dots,C_k,W)\in \bigcup_{l=0}^{2^m -1} \left(\prod_{i=1}^{k} \mathcal{C}_{l}^{(i)} \times \mathcal{W}_{l}^{(k+1)}  \right), (y_i)\in\bigoplus_{i=k+m+1}^{\infty} X_i\right\}.$$

We will show that $\mathcal{U}_{0}$ is $R_0$-disjoint. The elements of $\mathcal{U}_0$ depend on $l$, a choice of $C_1,\dots,C_k,W$, and a sequence $(y_i)$.
Suppose that $A,A'$ are different members of $\mathcal{U}_{0}$. If the corresponding sequences $(y_i), (y'_i)$ are different, then $d(A,A')\geq k+m+1>R_0$, which follows from the definition of the metric \eqref{metric} and $1$-discreteness of all $X_i$.
If $l\neq l'$, then the corresponding $W$ and $W'$ are disjoint (by construction of families $\mathcal{W}_{l}^{(k+1)}$) so $d(A,A')\geq k+1>R_0$.
In the remainig case we use the fact that each $\mathcal{C}_{l}^{(i)}$ is $R_0$-disjoint.

Let us fix a bijection $\phi \colon \{1,2,...,2^k\} \to \{0,1\}^k$. For $s\in \{1,2,\dots,k\}$ and $t\in \{1,2,\dots,2^k\}$ put
\begin{equation*}
\begin{split}
&\mathcal{U}_{2^k (s-1)+t}=\left\{\prod_{i=1}^{s-1} V_i \times D_s \times \prod_{i=s+1}^{k} V_i \times W \times \prod_{i>k+m} \{y_i\}\right. \colon \\
 & \left. V_i \in \mathcal{V}_{\phi(t)_{i}}^{(i)}, i\in\{1,\dots,s-1,s+1,\dots,k\}, (D_s,W)\in \bigcup_{l=0}^{2^m -1}\left(\mathcal{D}_{l}^{(s)}\times \mathcal{W}_{l}^{(k+1)}\right), (y_i)\in \bigoplus_{i=k+m+1}^{\infty} X_i \right\}.
\end{split}
\end{equation*}

We will show that for each $j\in\{1,2,\dots,k2^k\}$ the family $\mathcal{U}_j$ is $R_{k2^k}$-disjoint, in particular $R_j$-disjoint.
The reasoning is similar to that for $\mathcal{U}_0$. Fix $s$ and $t$.
Suppose that $A,A'$ are different members of $\mathcal{U}_{2^k(s-1)+t}$. If the corresponding sequences $(y_i), (y'_i)$ are different, then $d(A,A')\geq k+m+1>R_{k2^k}$.
If $l\neq l'$, then the corresponding $D_s$ and $D'_s$ are $S-R_0=R_{k2^k}$-disjoint so $d(A,A')\geq R_{k2^k}$.
If $l$ is common for $A,A'$ and $W\neq W'$, we use $R_{k2^k}$-disjointness of $\mathcal{W}_{l}^{(k+1)}$.
In the remainig case there exists $i\in\{1,\dots,s-1,s+1,\dots,k\}$ such that $V_i \neq V'_i$ and we use $R_{k2^k}$-disjointness of $\mathcal{V}^{(i)}_{\phi(t)_{i}}$.

Let us show that $\bigcup_{j=0}^{k2^k} \mathcal{U}_{j}$ covers $\bigoplus_{i=1}^{\infty} X_i$. Let $(x_i)\in \left(\bigoplus_{i=1}^{\infty} X_i\right) \setminus \bigcup \mathcal{U}_{0}$. Since $\bigcup_{l=0}^{2^m -1} \mathcal{W}_{l}^{(k+1)}$ covers $X_{k+1} \times \dots \times X_{k+m}$, there exist $l\in\{0,\dots,2^m -1\}$ and 
$W\in \mathcal{W}_{l}^{(k+1)}$ such that $(x_i)_{i=k+1}^{k+m} \in W$. Since $(x_i)\not\in\bigcup \mathcal{U}_{0}$, $(x_i)_{i=1}^{k}$ does not belong to any set of the form
$\prod_{i=1}^{k} C_i$ where $C_i \in \mathcal{C}_{l}^{(i)}$ for $i\in\{1,\dots,k\}$. Thus, there is $s\in\{1,\dots,k\}$ such that $x_s \not\in \bigcup \mathcal{C}_{l}^{(s)}$. Since $\bigcup(\mathcal{C}_{l}^{(s)} \cup \mathcal{D}_{l}^{(s)})=X_s$, there exists $D_s \in \mathcal{D}_{l}^{(s)}$ containing $x_s$.
Since $\bigcup (\mathcal{V}_{0}^{(i)} \cup \mathcal{V}_{1}^{(i)})=X_i$ for each $i$, there exists $t\in\{1,\dots,2^k\}$ such that
$(x_i)_{i=1}^{k} \in \prod_{i=1}^{k} V_i$ for some $V_i \in \mathcal{V}_{\phi(t)_{i}}^{(i)}$, $i\in\{1,\dots,k\}$.
Thus,
$$(x_i)\in \prod_{i=1}^{s-1} V_i \times D_s \times \prod_{i=s+1}^{k} V_i \times W \times \prod_{i>k+m} \{x_i\} \in \mathcal{U}_{2^k (s-1)+t}.$$

Finally, the families $\mathcal{U}_{j}$, $j=\{0,\dots,k2^k\}$ are uniformly bounded. It follows from the fact that the construction of these families involves families $\mathcal{V}_{0}^{(i)}$, $\mathcal{V}_{1}^{(i)}$, $\mathcal{C}_{l}^{(i)}$, $\mathcal{D}_{l}^{(i)}$, $\mathcal{W}_{l}^{(i)}$, which are uniformly bounded with meshes independent of $i\in\mathbb{N}$.
\end{proof}

\begin{rem}
In fact we have proved more, namely $\bigoplus_{i=1}^{\infty} X_i$ has {\em transfinite asymptotic dimension} not greater than $\omega$. For definition see \cite[Section 3]{Rad}. It follows from the fact that for fixed $R_0$ there exists a natural number $n=(R_0+1)2^{R_0+1}$ (depending only on $R_0$) such that for any sequence $R_0<R_1<\dots$ there exists a sequence $\{\mathcal{U}_i\}_{i=0}^{n}$ of uniformly bounded families such that $\bigcup_{i=0}^{n} \mathcal{U}_i$ covers $\bigoplus_{i=1}^{\infty} X_i$ and $\mathcal{U}_i$ is $R_i$-disjoint for $i=0,\dots,n$. Loosely speaking, the number of sufficiently disjoint and uniformly bounded families needed to cover the space is constant for fixed $R_0$.
\end{rem}

We apply our result to the case of free groups. In the realm of dicrete spaces we call a metric {\em proper} if all balls with respect to this metric are finite.

\begin{cor}
Let $(G_i)_{i\in \mathbb{N}}$ be a sequence of free groups of finite rank (which may depend on $i$) and $G=\bigoplus_{i=1}^{\infty} G_i$ their group-theoretical direct sum. Then $G$ equipped with any proper left-invariant metric has asymptotic property C.
\end{cor}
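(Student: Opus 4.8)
The plan is to reduce the corollary to Theorem~\ref{main} by two observations: that the metric \eqref{metric} is itself a proper left-invariant metric on $G$, and that asymptotic property C does not depend on which proper left-invariant metric we use.

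First I would equip each free group $G_i=\mathbb{F}_{n_i}$ with its word length metric $d_i$ and choose the basepoint $o_i$ to be the identity $e_i$. By the Example in the introduction each $(G_i,d_i)$ is $1$-discrete and $0$-hyperbolic, so the sequence $(G_i,d_i)_{i\in\mathbb{N}}$ satisfies the hypotheses of Theorem~\ref{main} with $D=1$. Forming the metric-space direct sum $\bigoplus_{i=1}^{\infty} G_i$ with the metric \eqref{metric}, Theorem~\ref{main} gives that this metric space has asymptotic property C. As a set this direct sum coincides with the group-theoretical direct sum $G$.

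Next I would verify that \eqref{metric} is in fact a proper left-invariant metric for the group structure on $G$. Left-invariance is immediate: the group operation on $G$ is coordinatewise, each word metric $d_i$ is left-invariant, and the weights $i$ are constant, so $d(gx,gy)=\sum_i i\,d_i(g_ix_i,g_iy_i)=\sum_i i\,d_i(x_i,y_i)=d(x,y)$. For properness, fix $r>0$ and look at the ball of radius $r$ about the identity. If $x_i\neq e_i$ then $i\,d_i(x_i,e_i)\geq i$ by $1$-discreteness, so any element of the ball has $x_i=e_i$ for $i>r$; and for each $i\leq r$ the word length $d_i(x_i,e_i)$ is at most $r/i$, which leaves only finitely many choices for $x_i$ since balls in a finite-rank free group are finite. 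Hence every ball is finite, and the metric is proper.

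Finally I would invoke the fact that on a countable group any two proper left-invariant metrics are coarsely equivalent, together with the fact that asymptotic property C is a coarse invariant. Since $G$ is countable (a countable union of countable groups) and \eqref{metric} is a proper left-invariant metric for which property C has just been established, any other proper left-invariant metric $\rho$ on $G$ is coarsely equivalent to \eqref{metric}, and therefore $(G,\rho)$ also has asymptotic property C. The main obstacle is precisely this last step: one must either cite the uniqueness up to coarse equivalence of proper left-invariant metrics on a countable group, or verify directly that the identity map between $(G,\rho)$ and $G$ endowed with \eqref{metric} is a coarse equivalence, and separately confirm that property C transfers along coarse equivalences.
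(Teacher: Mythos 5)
Your proposal is correct and follows essentially the same route as the paper: apply Theorem~\ref{main} to the word metrics $d_i$ on the $G_i$, observe that the metric \eqref{metric} is proper and left-invariant on $G$, and conclude via the coarse equivalence of all proper left-invariant metrics on a countable group together with the coarse invariance of asymptotic property C. Your explicit verifications of left-invariance and properness (using $1$-discreteness to bound the support and finiteness of balls in finite-rank free groups) merely spell out details the paper asserts without proof.
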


\begin{proof}
Let $d_i$ be the usual left-invariant word length metric in $G_i$ for $i\in \mathbb{N}$. Then each $(G_i,d_i)$ is $0$-hyperbolic and $1$-discrete. The metric $d$ defined by \eqref{metric} in $G$ is proper and left-invariant. By {Theorem \ref{main}} the space $(G,d)$ has asymptotic property C. The corollary follows from the fact that all proper left-invariant metrics on $G$ are coarsely equivalent and asymptotic property C is a coarse invariant.
\end{proof}

Roe proved in \cite{Roe2} that $\delta$-hyperbolic geodesic spaces with bounded growth have finite asymptotic dimension. This suggests posing the following problem:

\begin{pro}
Can Theorem \ref{main} be generalized to the case of $\delta$-hyperbolic spaces (with common $\delta$ and commonly bounded growth)?
\end{pro}


\begin{thebibliography}{HD}




\normalsize
\baselineskip=17pt


\bibitem{Bonk} M. Bonk and O. Schramm,
\emph{Embeddings of Gromov hyperbolic spaces},
Geometric And Functional Analysis {10} (2000), 266--306.

\bibitem{Dran}  A. Dranishnikov,
\emph{Asymptotic topology},
Russ. Math. Surv. {55} (2000), 1085--1129.

\bibitem{Rad} T. Radul,
\emph{On transfinite extension of asymptotic dimension},
Topology and its Applications {157} (2010), 2292--2296.

\bibitem{Roe2} J. Roe,
\emph{Hyperbolic groups have finite asymptotic dimension},
Proceedings of the AMS {133}, 2489--2490.

\bibitem{Roe} J. Roe,
\emph{Lectures on Coarse Geometry},
University Lecture Series {31}, American Mathematical Society, 2003.

\bibitem{Y}  T. Yamauchi,
\emph{Asymptotic property C of the countable direct sum of the integers},  
Topology and its Applications {184} (2015), 50--53.

\end{thebibliography}
\end{document}